\begin{document}

\newtheorem{Theorem}{Theorem} %[section]
\newtheorem{Corollary}[Theorem]{Corollary}
\newtheorem{Application}[Theorem]{Application}
\newtheorem{Definition}[Theorem]{Definition}
\newtheorem{Remark}[Theorem]{Remark}
\newtheorem{Example}[Theorem]{Example}
\newtheorem{Notation}[Theorem]{Notation}
\newtheorem{Lemma}[Theorem]{Lemma}
\newtheorem{Conjecture}[Theorem]{Conjecture}
\newtheorem{Proposition}[Theorem]{Proposition}
\newtheorem{OQ}[Theorem]{Open Question}

\newcommand{\E}{{\rm E}}
\newcommand{\R}{\mathbb R}
\newcommand{\C}{\mathbb C}
\newcommand{\Z}{\mathbb Z}
\newcommand{\T}{\mathbb T}
\newcommand{\N}{\mathbb N}
\newcommand{\M}{\mathbb M}
\newcommand{\Q}{\mathbb Q}
\renewcommand{\P}{\mathbb P}

\newcommand{\lra}{\longrightarrow}

\renewcommand{\phi}{\varphi}
\renewcommand{\theta}{\vartheta}
\newcommand{\eps}{\varepsilon}
\newcommand{\wt}[1]{\widetilde{#1}}
\newcommand{\phit}{\wt{\phi}}
\newcommand{\Times}{{\bigtimes}}
\renewcommand{\r }{\rangle}
\renewcommand{\l }{\langle}
\newcommand{\e}{\equiv}
\newcommand{\A}{{\rm a}}
\newcommand{\Dim}{{\rm dim}\,}
\newcommand{\ol}[1]{\overline{#1}}
\newcommand{\dis}{\displaystyle}
\newcommand{\D}{{\rm d}}
\newcommand{\lcm}{{\rm lcm}}
\newcommand{\plim}{ \lim_{\leftarrow}}
\newcommand{\g}{>}
\newcommand{\s}{<}
\newcommand{\np}{\noindent{\bf Proof}:$\quad $}
\renewcommand{\deg}{{\rm deg}}

\newenvironment{romliste}{\renewcommand{\labelenumi}{\rm\roman{enumi})}
                          \begin{enumerate}}{\end{enumerate}}

\title{ One--parameter subgroups  of topological abelian groups }
\date{}
\author{M. J. Chasco\\{\small\em Dept. de F\'{\i}sica y Matem\'{a}tica Aplicada},
{\small\em University  of Navarra}\\
{\small\em e-mail: mjchasco@unav.es}}

\maketitle
\begin{abstract}
It is proved that, for a wide class of topological abelian groups (locally quasi--convex groups for which the canonical evaluation from the group into  its Pontryagin bidual group is onto) the arc component of the group is exactly the union of the one--parameter subgroups. We also prove that for metrizable separable locally arc--connected reflexive groups, the exponential map from the  Lie algebra into the group is open.

\end{abstract}

Some properties of one--parameter subgroups of locally compact groups have been known for a long time. In a paper published  1967 Rickert proved  that in a compact arc--connected group every point lies on a one--parameter subgroup.  Previously Gleason   had shown  in 1950 that every finite dimensional, locally compact group contains a one-parameter subgroup. There are also examples of topological groups without non trivial one parameter subgroups; this  is the case for instance of the subgroup of integer--valued functions of the Hilbert space $L^2[0,1]$. For topological abelian groups which are $k-spaces$, the arc component of the dual group is the union of its one--parameter subgroups. This result  published by  Nickolas in 1977, remained for some years the only available piece of information outside the class of locally compact groups. Recently  it was proved in \cite{AChD} that the same is true for a much wider class of topological groups.

In this paper we go deeper into the study of one parameter subgroups of topological abelian groups. We take as a reference the papers \cite{AChD}, \cite{ATChD}, \cite{Nickolas}) and  the book by Hofmann and Morris ``The structure of compact groups" (2000) which presents a wide range of   properties of  the one--parameter subgroups  of locally compact abelian groups.

 In order to do that we  use two ingredients:  on one hand Pontryagin duality techniques and on the other the relation between ${\rm CHom}(\mathbb R,G)$ and the group $G$ which is given by the evaluation mapping $${\rm CHom}(\mathbb R,G)\longrightarrow G,\
\phi\longmapsto \phi(1)$$
  The vector space ${\rm CHom}(\mathbb
   R,G)$  endowed with the compact open topology is called the Lie algebra of the topological group $G$ and denoted by ${\cal L}(G)$ in analogy with the classical theory of Lie groups. In that case the evaluation mapping is continuous and it is called  the exponential function (${\rm exp}_G$).
  The elements of ${\rm
im}\, {\rm exp}_G$ are those lying on one-parameter subgroups, and $G$ is the union of its one-parameter subgroups if and only if ${\rm exp}_G$ is onto.

For a topological group $G$, we denote by $G_a$ its arc--component.

The main results of the paper are:

{\bf Theorem 6.}
 Let $G$ be a Hausdorff locally quasi--convex topological abelian group for which the evaluation mapping from the group into its bidual group  is onto. Then,  ${\rm im}\,{\rm exp}_{G}=G_a$

 {\bf Theorem 11.}
If $G$ is a  metrizable separable reflexive topological abelian group which has an arc--connected zero neighborhood, then the exponential mapping $\exp_{G}:{\cal L}(G)\longrightarrow G_a$ is a quotient map.

Observe that if $G$ is the additive topological group of a topological vector space $E$, then $\exp_{G}:{\rm CHom}(\R,G)\longrightarrow G,\
\phi\longmapsto \phi(1)$ is a topological isomorphism of topological vector spaces  and in that case all the topological and algebraic information about $G$ is in  ${\rm CHom}(\R,G)$.

We will use  Pontryagin duality theory,   hence  our topological groups need to be  abelian.
Pontryagin-van Kampen duality theorem for locally compact abelian  groups (LCA groups) is a very deep result. It is the  base of Harmonic Analysis and it allows to know the structure of LCA groups. This explains why a  consistent Pontryagin-van Kampen duality theory has been developed for general topological abelian groups  and why the abelian topological groups satisfying the Pontryagin-van Kampen duality, the so called  {\em reflexive groups}, have received considerable attention from the late 40's of the last century (see \cite{survey} for a survey on the subject).

For an abelian topological group $G$, the {\em character group} or
 {\em dual group}
 $G^\wedge$ of $G$ is defined by

 $$G^\wedge:=\{\varphi:G\rightarrow\T:\  \chi\ \mbox{ is a continuous
 homomorphism}\}$$ where $\T$ denotes the compact group of complex numbers
 of modulus $1$.  The elements of $G^\wedge$
 are called {\em characters}.
 We say that the group $G$ {\em has enough characters} or that $G$ is a {\em MAP} group if for any $0\neq x\in
 G$, there is some character $\varphi\in G^\wedge$ such that
 $\varphi(x)\neq 1$.

 Endowed with the compact--open topology $G^\wedge$ is an abelian
 Hausdorff group. The {\em bidual} group  $G^{\wedge\wedge}$ is $(G^\wedge)^\wedge$ and the canonical mapping in the bidual group is defined by:
 $$\alpha_G:G\rightarrow G^{\wedge\wedge},\ x\mapsto
 \alpha_G(x):\varphi\mapsto \varphi(x) .$$

 The group $G$ is called {\em Pontryagin reflexive} if $\alpha_G$ is
 a topological isomorphism. For the sake of simplicity, we will use the term
{\em reflexive} only. The famous Pontryagin--van Kampen theorem
states that every locally compact abelian (LCA) group is reflexive.

 The {\em annihilator of} a subgroup $H \subset G$ is defined as the subgroup
$H^\bot := \{ \varphi \in G^\wedge \colon \varphi
(H)=\{1\} \}$. If $L$ is a subgroup of $G^\wedge$, the {\it inverse annihilator} is defined  by $L^\bot:= \{  g \in G  \colon \varphi (g) = 1, \forall \varphi \in L \}$.

Annihilators are the particularizations for subgroups of the more general notion of polars of subsets. Namely, for $A \subset G $ and $B\subset G^\wedge$, the polar of $A$ is  $A^\triangleright: = \{ \varphi \in G^\wedge: \varphi(A) \subset \Bbb T_+ \}$  and the inverse polar of $B$ is $B^{\triangleleft}:= \{  g \in G  \colon \varphi (g) \in \Bbb T_+, \forall \varphi \in B \}$, where $\T_+:=\{z\in\T:\ {\rm Re}\,z\ge
0\}$ .

For a topological
Abelian group $G$, it is not difficult  to prove that a set $M\subset G^\wedge$ is
\emph{equicontinuous\/} if there exists a neighborhood $U$ of the
neutral element in $G$ such that $M\subset U^\triangleright$. Other  standard facts in duality theory are:  If $U$ is a neighborhood  zero in $G$, its polar is compact and  the dual group $G^\wedge$ is the union of all polars of zero neighborhoods   in $G$. The family $\{K^\triangleright; K \;\mbox{is a compact subset of}\;G  \}$ is a neighborhood basis  of the neutral element in $G^\wedge$.

A subgroup $H$ of a topological group $G$ is said to be {\em dually closed}   if, for every element $x$ of $G\setminus H$, there
is a continuous character $\varphi$ in $G^\wedge$ such that
$\varphi (H)=\{1\}$
and $\varphi (x)\not= 1$.

  Reflexive groups lie in a wider class of groups, the so called {\it locally quasi-convex groups}.  Vilenkin  had the seminal idea to define a sort of convexity for abelian topological groups inspired on the Hahn-Banach theorem for locally convex spaces.

A subset $A$ of a topological group $G$ is called {\em quasi-convex} if for every $g\in G\setminus A$, there is some $\varphi \in A^\triangleright $   such that $\mbox{Re}\varphi(g)<0$.

It is easy to prove that for any subset $A$ of a topological group $G$,  $A^{\triangleright \triangleleft}  $ is a quasi-convex set. It will be called the {\it quasi-convex hull} of $A$ since it is the smallest quasi-convex set  that contains $A$. Obviously, $A$ is quasi-convex iff $A^{\triangleright \triangleleft} = A$.

If $A$ is a subgroup of $G$, $A$ is  quasi-convex if and only if $A$ is dually closed. The abelian topological group $G$ is said to be locally quasi-convex if it admits a neighborhood basis of zero formed by quasi-convex subsets. Dual groups are examples of locally quasi-convex groups. For locally quasi--convex groups the evaluation map $\alpha_G$ is injective and open onto its image. A topological vector space $E$ is locally convex if and only if in its additive structure is a locally quasi-convex topological group (see \cite[2.4]{Ban}).

By a {\em  real character} (as opposed to a {\em character}) on an
abelian topological group $G$ it is commonly understood a continuous
homomorphism from $G $ into the reals $\mathbb{R }$.  The real
characters on $G$ constitute  the vector space  ${\rm CHom}(G,\mathbb
R)$. It is said that the group $G$ has {\em enough real characters}
if ${\rm CHom}(G,\mathbb R)$ separates the points of $G$. We denote by ${\rm CHom}_{co}(G,\mathbb R)$
the group of real characters endowed with the compact open topology. We say that a
character $\varphi: G\rightarrow \mathbb{T}$ can be \emph{lifted to
a real character}, if there exist a real character $f$ such that $e^{2\pi if}=\varphi$.
 We denote
by $ G^{\wedge}_{\rm lift}$ the subgroup of $G^\wedge$ formed by the characters that can be lifted to a continuous real
character.

Given a topological abelian group $G$, we denote by
$\omega(G,G^\wedge)$ the weak topology on $G$  that is,  the
topology on $G$ induced by the elements of $G^\wedge$. This topology
coincides with the Bohr topology.

On the other hand
$\omega(G^\wedge, G)$ denotes the topology on $G^\wedge$ of
pointwise convergence.

In a topological  group $G$, the arc components are homeomorphic and
it makes sense to refer to the arc component of the neutral element
$G_a$  as the arc component.

There are well known results about the
arc component of locally compact groups.
Let $G$ be an LCA group,  $G_a$ its arc component and  $G_0$ its
connected component.

\begin{enumerate}
\item
$G_a= {\rm im}\,{\rm exp}_{G}$ (see \cite[8.30]{HM}).
\item
The arc component $G_a$ is dense in the connected component $G_0$ (see \cite[7.71]{HM}).

\item
The group $G$ has enough real characters iff the dual group $G^\wedge$ is connected (see \cite[24.35]{HR}).
\end{enumerate}

What can be said about the arc component in more general classes of groups?
It is known that for topological abelian groups which are $k-spaces$,
the arc component of the dual group is exactly  the union of its one--parameter subgroups  (see \cite{Nickolas}).
It was proved recently  that the same is true for a much wider class of topological groups: groups satisfying the EAP condition (A topological group $G$ satisfies the EAP  if every  arc in $G^\wedge$ is equicontinuous). This  is introduced  and studied for different groups in \cite{AChD}.

Next, we find new classes of groups for which $G_a= {\rm im}\,{\rm exp}_{G}$.

 We start with a Lemma that  can be proved in a straightforward way.
\begin{Lemma}\label{dense}
Let $G$ be a Hausdorff topological abelian group,  $H$ a
subgroup and $L$ a  subgroup of $G^\wedge$ then,
\par
$H$ is dense in the weak topology $\omega(G,G^\wedge)$ iff
$H^\bot=\{e\}$
\par
 $L$ is dense in the pointwise topology $\omega(G^\wedge, G)$ if $L^\bot=\{e\}$.
If $G^\wedge$ is a MAP group,  the
reverse implication is also true.
\end{Lemma}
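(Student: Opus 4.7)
Both assertions are instances of the bipolar principle that in the relevant weak topology the closure of a subgroup equals its double annihilator. I would prove this directly by reducing to $\T^n$ and invoking elementary Pontryagin duality there.

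For the first statement, the forward direction is immediate: each $\varphi \in H^\bot$ is one of the characters defining $\omega(G,G^\wedge)$, hence is continuous in that topology, and vanishes on $H$; therefore it also vanishes on the weak closure of $H$, and if $H$ is weakly dense then $\varphi=e$. For the converse, I would assume $H^\bot=\{e\}$, fix $g\in G$ and a basic weak neighborhood $U=\{x\in G:|\varphi_i(x)-\varphi_i(g)|<\eps,\ 1\le i\le n\}$, and look at the continuous homomorphism $\Phi:G\to\T^n$, $x\mapsto(\varphi_1(x),\dots,\varphi_n(x))$. Meeting $U$ with $H$ is equivalent to $\Phi(g)\in\ol{\Phi(H)}$. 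Since $\Phi(H)$ is a subgroup of $\T^n$, duality inside $\T^n$ reduces this to checking that every character $(z_1,\dots,z_n)\mapsto\prod z_i^{k_i}$ of $\T^n$ annihilating $\Phi(H)$ also annihilates $\Phi(g)$. But such annihilation translates to $\sum k_i\varphi_i\in H^\bot=\{e\}$, so the character is trivial and its value at $\Phi(g)$ is automatically $1$.

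The second statement is completely parallel. For the forward implication I would use the symmetric map $\Psi:G^\wedge\to\T^n$, $\chi\mapsto(\chi(g_1),\dots,\chi(g_n))$, for a fixed $\psi\in G^\wedge$ and its basic pointwise neighborhood, and rerun the argument; a $\T^n$--character annihilating $\Psi(L)$ now corresponds to $\sum k_ig_i\in L^\bot=\{e\}$ and is again trivial. For the converse, if $L$ is dense in $\omega(G^\wedge,G)$ and $g\in L^\bot$, then the evaluation $\alpha_G(g):G^\wedge\to\T$ is continuous in the pointwise topology and equals $1$ on $L$, hence on $\ol L=G^\wedge$; the MAP hypothesis, by forcing $\alpha_G$ to be injective, then yields $g=e$.

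There should be no genuine obstacle; the only nontrivial input is the closure--equals--double--annihilator principle for subgroups of $\T^n$, which I take as classical. The MAP hypothesis in the converse is genuinely needed: without it, any $g\in\bigcap_{\chi\in G^\wedge}\ker\chi\setminus\{e\}$ would lie in $L^\bot$ even for the dense subgroup $L=G^\wedge$ itself, producing a counterexample.
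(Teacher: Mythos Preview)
Your argument is correct, and the paper itself gives no proof of this lemma (it is introduced as something that ``can be proved in a straightforward way''), so there is nothing to compare against. Your reduction to the bipolar principle in $\T^n$ is a standard and clean way to supply the missing details.

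One small remark on the second part: the lemma as literally written assumes $G^\wedge$ is MAP for the reverse implication, whereas your argument uses that $G$ is MAP (equivalently, that $\alpha_G$ is injective). This is almost certainly the intended hypothesis: the paper's own application in Theorem~\ref{section:maintheorem} invokes the reverse implication under the assumption that $G$ is MAP, and in any case $G^\wedge$ is automatically MAP (for $\varphi\neq e$ in $G^\wedge$ pick $g\in G$ with $\varphi(g)\neq 1$; then $\alpha_G(g)\in G^{\wedge\wedge}$ separates $\varphi$ from $e$), so the hypothesis as stated would be vacuous. Your reading is the right one. Also, in the first converse your phrase ``so the character is trivial'' should be read as ``the pulled-back character $\sum k_i\varphi_i$ is trivial on $G$''; the $\T^n$-character $\prod z_i^{k_i}$ itself need not be trivial, but its value at $\Phi(g)$ is still $1$, which is all you need.
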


\begin{Lemma}\label{arco}
Let $G$ be a Hausdorff topological abelian group. If  $K$ is a compact
subset of $G$, then $\alpha_G(K)$ is an equicontinuous subset of $G^{\wedge\wedge}$.
\end{Lemma}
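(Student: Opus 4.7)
The plan is to unpack the two standard facts from the preceding duality discussion and check that they fit together trivially. Recall that a subset $M\subset G^{\wedge\wedge}$ is equicontinuous precisely when there is a zero-neighborhood $U$ in $G^\wedge$ with $M\subset U^\triangleright$ (here the polar is taken in the second dual). Recall also that the compact-open topology on $G^\wedge$ admits, as a neighborhood basis of the neutral element, the family $\{K^\triangleright : K\text{ compact in }G\}$. Given that our $M$ is built from a compact $K\subset G$, the obvious candidate for the witnessing zero-neighborhood of $G^\wedge$ is $U:=K^\triangleright$.

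So the single thing to verify is the inclusion
\[
\alpha_G(K)\;\subset\;(K^\triangleright)^\triangleright
\]
where the outer polar is computed in $G^{\wedge\wedge}$. Pick $x\in K$ and $\varphi\in K^\triangleright$. By the very definition of $\alpha_G$ we have $\alpha_G(x)(\varphi)=\varphi(x)$, and by the definition of the polar $K^\triangleright$ the right-hand side lies in $\mathbb T_+$. Hence $\alpha_G(x)\in (K^\triangleright)^\triangleright$, as required, and $\alpha_G(K)$ is contained in the polar of a zero-neighborhood in $G^\wedge$, i.e.\ it is equicontinuous.

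There is essentially no obstacle here: the lemma is a direct unwinding of definitions, and the only conceptual point is to remember that compact sets in $G$ are exactly what produce basic zero-neighborhoods in $G^\wedge$, while polars of such zero-neighborhoods are exactly the equicontinuous sets in $G^{\wedge\wedge}$. The map $\alpha_G$ interchanges the roles of ``points of $G$'' and ``evaluations on $G^\wedge$'' in precisely the way needed for the inclusion to hold by inspection.
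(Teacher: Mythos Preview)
Your proof is correct and follows essentially the same route as the paper: both identify $K^\triangleright$ as a zero-neighborhood in $G^\wedge$ and then show $\alpha_G(K)\subset K^{\triangleright\triangleright}$. The only cosmetic difference is that the paper routes the inclusion through the identity $\alpha_G(K^{\triangleright\triangleleft})=K^{\triangleright\triangleright}\cap\alpha_G(G)$, whereas you verify $\alpha_G(K)\subset K^{\triangleright\triangleright}$ directly from the definitions, which is slightly more economical.
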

\begin{proof}
The polar set $K^\triangleright$ is a neighborhood of $0$ for the compact open topology of $G^\wedge$, then $K^{\triangleright\triangleright}$ is an equicontinuous subset of $G^{\wedge\wedge}$. Moreover, $\alpha_G(K^{\triangleright\triangleleft})=K^{\triangleright\triangleright}\bigcap \alpha_G(G)$. Therefore
$\alpha_G(K)\subseteq \alpha_G(K^{\triangleright\triangleleft})=K^{\triangleright\triangleright}\bigcap \alpha_G(G)\subseteq K^{\triangleright\triangleright}$. Since $\alpha_G(K)$ is contained in an equicontinuous subset, it is itself equicontinuous.
\end{proof}

%$\T_n=\{e^{2\pi i t}:\   | t|\le  \frac{1}{4n}\}.$

\begin{Proposition}\label{continuous}
Let $G$ be a Hausdorff topological abelian group and $\gamma: \mathbb I\rightarrow G$ a continuous arc, then the mapping $\Phi_\gamma: G^\wedge \times \mathbb I \rightarrow \mathbb T$ defined by $\Phi_\gamma(\chi, t )=\chi(\gamma(t))$ is continuous.
\end{Proposition}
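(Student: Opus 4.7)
The plan is to exploit the fact that the arc $\gamma$ has compact image, so Lemma \ref{arco} applies and gives us the equicontinuity we need to bridge the gap between separate continuity and joint continuity.

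First, I would set $K:=\gamma(\mathbb I)$. Since $\mathbb I=[0,1]$ is compact and $\gamma$ is continuous, $K$ is a compact subset of $G$. By Lemma \ref{arco}, $\alpha_G(K)$ is an equicontinuous subset of $G^{\wedge\wedge}$. Unpacking this, it means: for every neighborhood $V$ of $1$ in $\T$ there exists a neighborhood $W$ of the neutral element in $G^\wedge$ such that $\psi(x)\in V$ for every $\psi\in W$ and every $x\in K$. This is the key tool; everything else is standard bookkeeping.

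Next I would verify continuity of $\Phi_\gamma$ at an arbitrary point $(\chi_0,t_0)\in G^\wedge\times\mathbb I$. Given a neighborhood $V$ of $1$ in $\T$, choose a symmetric neighborhood $V_0$ with $V_0\cdot V_0\subset V$. By the equicontinuity obtained above, there is a neighborhood $W$ of $0$ in $G^\wedge$ with $\psi(K)\subset V_0$ for every $\psi\in W$; in particular, if $\chi\in\chi_0+W$ then $(\chi-\chi_0)(\gamma(t))\in V_0$ for every $t\in\mathbb I$. On the other hand, the composition $\chi_0\circ\gamma:\mathbb I\to\T$ is continuous, so there exists an open neighborhood $U$ of $t_0$ in $\mathbb I$ with $\chi_0(\gamma(t))\cdot\chi_0(\gamma(t_0))^{-1}\in V_0$ for every $t\in U$. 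Writing
\[
\chi(\gamma(t))\cdot\chi_0(\gamma(t_0))^{-1}=(\chi-\chi_0)(\gamma(t))\cdot\bigl(\chi_0(\gamma(t))\cdot\chi_0(\gamma(t_0))^{-1}\bigr)
\]
shows that $\Phi_\gamma((\chi_0+W)\times U)\subset \chi_0(\gamma(t_0))\cdot V$, which is the desired joint continuity.

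I do not foresee a genuine obstacle: once Lemma \ref{arco} is in hand, the argument reduces to the standard ``add and subtract'' splitting of $\chi(\gamma(t))-\chi_0(\gamma(t_0))$, where equicontinuity handles the term involving the character variable uniformly in $t\in K$, and plain continuity of $\chi_0\circ\gamma$ handles the term involving the parameter variable. The only point that requires a moment of care is that the compact-open topology on $G^\wedge$ does not, in general, make evaluation jointly continuous on $G^\wedge\times G$; this is precisely the reason compactness of $K=\gamma(\mathbb I)$ (rather than of a generic subset of $G$) is essential.
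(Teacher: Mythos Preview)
Your proof is correct and follows essentially the same route as the paper's: both invoke Lemma~\ref{arco} on the compact image $K=\gamma(\mathbb I)$ to obtain equicontinuity of $\alpha_G(K)$, then split $\chi(\gamma(t))\cdot\chi_0(\gamma(t_0))^{-1}$ into a term controlled uniformly in $t$ by that equicontinuity and a term controlled by the continuity of $\chi_0\circ\gamma$. The only cosmetic difference is that the paper works with the explicit basic neighborhoods $\T_n$ and phrases equicontinuity ``at $\chi_0$'' rather than at the identity, whereas you use a generic symmetric $V_0$ and translate; the arguments are otherwise identical.
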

\begin{proof}
Take $\chi_0 \in G^\wedge$ and $t_0\in \mathbb I$. Let us see that  $\Phi_\gamma$ is continuous at $(\chi_0,t_0)$.
For every $n\in \N$, let us denote by $\T_n$ the neighborhood of $1$ in $\T$, $ \{e^{2\pi i t}:\   | t|\le  \frac{1}{4n}\}.$
The mapping $\chi_0\circ \gamma:\mathbb I \rightarrow \mathbb T$ is continuous so, for every $n\in \N$ there exists $V_{t_0}$ neighborhood of $t_0$ in $\mathbb I$ such that $\chi_0(\gamma(t))\overline{\chi_0(\gamma(t_0))} \in \mathbb T_{{2n}}$,  for every $t\in V_{t_0}$.\\
On the other hand since $\gamma(\mathbb I)$ is a compact subset of $G$, by \ref{arco}, $\alpha_G(\gamma(\mathbb I))$ is equicontinuous at $\chi_0$ hence, for every $n\in \N$, there exists a neighborhood $U_{\chi_0}$ of $\chi_0$ in $G^\wedge$ such that $\chi(\gamma(t))\overline{\chi_0(\gamma(t))} \in \mathbb T_{{2n}}$ for every  $t\in \mathbb I$ and $\chi \in U_{\chi_0}$.\\
Therefore $\chi(\gamma(t))\overline{\chi_0(\gamma(t))}\chi_0(\gamma(t))\overline{\chi_0(\gamma(t_0))} \in \mathbb T_{{2n}}\mathbb T_{{2n}}\subset \mathbb T_{{n}}$ for every $t\in V_{t_0}$  and $\chi \in U_{\chi_0}$. This proves  $\Phi_\gamma$ is continuous at $(\chi_0, t_0)$.
\end{proof}

\begin{Proposition}\label{contained}
Let $G$ be a Hausdorff topological abelian group and $G_a$ its arc component, then $G_a\leq \alpha_G^{-1}(G^{^{\wedge\wedge}}_{\rm lift})$

%\{x\in G: \alpha_G(x)\in G^{^{\wedge\wedge}}_{\rm lift}\}$
\end{Proposition}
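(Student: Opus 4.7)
My plan is to exploit Proposition \ref{continuous} together with the homotopy lifting property of the covering map $p\colon\R\to\T$, $p(t)=e^{2\pi i t}$.

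Fix $x\in G_a$ and a continuous arc $\gamma\colon[0,1]\to G$ with $\gamma(0)=0$ and $\gamma(1)=x$. By Proposition \ref{continuous}, the evaluation
$\Phi_\gamma\colon G^\wedge\times[0,1]\to\T$, $\Phi_\gamma(\chi,t)=\chi(\gamma(t))$, is continuous, and $\Phi_\gamma(\chi,0)=1$ for every $\chi\in G^\wedge$. Since $p$ is a covering map, the homotopy lifting property (which for covering maps holds for an arbitrary base space) produces a unique continuous map $\widetilde\Phi\colon G^\wedge\times[0,1]\to\R$ with $p\circ\widetilde\Phi=\Phi_\gamma$ and $\widetilde\Phi(\chi,0)=0$ for all $\chi$.

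I would then define $f\colon G^\wedge\to\R$ by $f(\chi):=\widetilde\Phi(\chi,1)$. Continuity of $f$ is immediate from the continuity of $\widetilde\Phi$, and evaluating at $t=1$ gives $e^{2\pi i f(\chi)}=\chi(\gamma(1))=\chi(x)=\alpha_G(x)(\chi)$, so $f$ lifts $\alpha_G(x)$. The only remaining point is to check that $f$ is a group homomorphism; this is the step one has to treat with a little care, but it falls out of the uniqueness of path lifting: for $\chi_1,\chi_2\in G^\wedge$, both the paths $t\mapsto \widetilde\Phi(\chi_1+\chi_2,t)$ and $t\mapsto\widetilde\Phi(\chi_1,t)+\widetilde\Phi(\chi_2,t)$ start at $0$ and project under $p$ to the same path $t\mapsto\chi_1(\gamma(t))\chi_2(\gamma(t))$. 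By uniqueness they coincide, so evaluation at $t=1$ yields $f(\chi_1+\chi_2)=f(\chi_1)+f(\chi_2)$.

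Hence $f\in\mathrm{CHom}(G^\wedge,\R)$ and $e^{2\pi i f}=\alpha_G(x)$, which means $\alpha_G(x)\in G^{\wedge\wedge}_{\mathrm{lift}}$, i.e.\ $x\in\alpha_G^{-1}(G^{\wedge\wedge}_{\mathrm{lift}})$. Since $x\in G_a$ was arbitrary, the desired inclusion $G_a\le\alpha_G^{-1}(G^{\wedge\wedge}_{\mathrm{lift}})$ follows.

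The crux of the argument is the simultaneous lift of the whole family $\{\chi\circ\gamma\}_{\chi\in G^\wedge}$ in a way that depends continuously on $\chi$; this is exactly what Proposition \ref{continuous} is designed to enable, and the homomorphism property comes for free from uniqueness of the lift rather than from any separate computation.
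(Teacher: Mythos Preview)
Your proof is correct and follows essentially the same route as the paper: both use Proposition~\ref{continuous} to get joint continuity of $\Phi_\gamma$, lift the homotopy through the covering $p\colon\R\to\T$ starting from the zero map on $G^\wedge\times\{0\}$, and then restrict to $t=1$ to obtain the desired real character. Your treatment of the homomorphism step via uniqueness of path lifting is exactly what the paper alludes to, only spelled out in more detail.
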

\begin{proof}
Let $x$ be an element in $G_a$,  and let  $\gamma:\mathbb I\to G$ be a continuous mapping joining $0$ and $x$. Then,  $\Phi_\gamma: G^\wedge \times \mathbb I \rightarrow \mathbb T$ defined by $\Phi_\gamma(\chi,t )=\chi(\gamma(t))$ is continuous as we have seen in  \ref{continuous}. Denote by $\psi: G^\wedge \times\{0\} \to \R$ the null real character. By the homotopy lifting property we can find an homotopy $F:G^\wedge\times \mathbb I\to \R$ such that $p\circ F = \Phi_\gamma$ and $F\mid_{G^\wedge\times\{0\}}= \psi$. Now the unique path lifting property of $p:\R \to \T$ allows us to show that  $\varphi: G^\wedge \to \R$ defined as the restriction of $F$ to $G^\wedge\times \{1\}$   is a homomorphism and hence a continuos real character lifting $\alpha_G(x)$: $p\circ \varphi(l)=p\circ F(l,1)=\Phi_\gamma(l,1)=l(\gamma(1))=l(x)=\alpha_G(x)(l)$, for all $l\in G^\wedge$.
\end{proof}

\begin{Proposition}\label{alfasobre}
Let $G$ be a Hausdorff locally quasi--convex topological abelian group for which $\alpha_G$ is onto. Then,   $\alpha_G^{-1}(G^{^{\wedge\wedge}}_{\rm lift})\leq {\rm im}\,{\rm exp}_{G}\leq G_a$.
\end{Proposition}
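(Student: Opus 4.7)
My plan is to prove the two inclusions separately, with the second one being essentially immediate and the first being the substantive content.

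For the inclusion ${\rm im}\,{\rm exp}_G\le G_a$: if $x=\exp_G(\phi)=\phi(1)$ for some continuous homomorphism $\phi:\R\to G$, then $t\mapsto \phi(t)$ restricted to $[0,1]$ is a continuous arc joining $0$ to $x$, so $x\in G_a$.

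For the inclusion $\alpha_G^{-1}(G^{\wedge\wedge}_{\rm lift})\le {\rm im}\,{\rm exp}_G$, suppose $\alpha_G(x)=e^{2\pi i f}$ for some continuous real character $f:G^\wedge\to\R$. For each $t\in\R$, I would define $\eta_t:G^\wedge\to\T$ by $\eta_t(\chi)=e^{2\pi i t f(\chi)}$. Since $tf$ is a continuous real character of $G^\wedge$ and $p(s)=e^{2\pi is}$ is continuous, $\eta_t$ is a continuous character of $G^\wedge$, i.e.\ $\eta_t\in G^{\wedge\wedge}$. Because $\alpha_G$ is onto, there exists a unique $\phi(t)\in G$ with $\alpha_G(\phi(t))=\eta_t$ (uniqueness comes from $\alpha_G$ being injective, which is guaranteed by local quasi--convexity and Hausdorffness). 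This defines a map $\phi:\R\to G$.

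I would then check the three required properties. First, $\phi(1)=x$ by the defining lifting relation $\alpha_G(x)=e^{2\pi if}=\eta_1$ and the injectivity of $\alpha_G$. Second, $\phi$ is a homomorphism: for any $\chi\in G^\wedge$, $\alpha_G(\phi(s+t))(\chi)=e^{2\pi i(s+t)f(\chi)}=\eta_s(\chi)\eta_t(\chi)=\alpha_G(\phi(s)+\phi(t))(\chi)$, and injectivity of $\alpha_G$ forces $\phi(s+t)=\phi(s)+\phi(t)$.

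The one step requiring some care is the continuity of $\phi$. Here I would use that for locally quasi--convex Hausdorff groups with $\alpha_G$ onto, $\alpha_G$ is a topological isomorphism from $G$ onto $G^{\wedge\wedge}$ (it is continuous, injective, onto, and open onto its image by the facts recalled in the excerpt). Therefore it suffices to show that $t\mapsto\eta_t$ is continuous from $\R$ into $G^{\wedge\wedge}$ endowed with the compact--open topology. For a compact $K\subset G^\wedge$, the set $f(K)\subset\R$ is compact hence bounded by some $M>0$, so for $\chi\in K$ one has $|\eta_t(\chi)-\eta_{t_0}(\chi)|=|e^{2\pi i(t-t_0)f(\chi)}-1|\le 2\pi M|t-t_0|$, giving uniform convergence on $K$. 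Composing with $\alpha_G^{-1}$ yields continuity of $\phi$, and hence $\phi\in{\rm CHom}(\R,G)$ with $\exp_G(\phi)=x$. The only subtlety worth double--checking is that $\alpha_G^{-1}$ is continuous, which follows precisely from the hypothesis that $\alpha_G$ is open onto its image together with surjectivity.
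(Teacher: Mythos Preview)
Your proof is correct and constructs exactly the same one-parameter subgroup as the paper: the paper writes it functorially as $\phi=\alpha_G^{-1}\circ(\widetilde{\alpha_G(x)})^\wedge\circ S$ (where $S:\R\to\R^\wedge$ is the canonical isomorphism and $\widetilde{\alpha_G(x)}$ is your $f$), which unwinds pointwise to your $t\mapsto\alpha_G^{-1}(\eta_t)$, and obtains continuity automatically from the continuity of dual homomorphisms rather than by your direct compact--open estimate. One small caution: under the stated hypotheses $\alpha_G$ is only known to be injective and open onto its image, not continuous, so it is not quite a topological isomorphism as you assert midway through; but as you correctly note at the end, only the continuity of $\alpha_G^{-1}$ is actually used, and that does follow from openness plus surjectivity.
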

\begin{proof}
Observe that ${\rm im}\,{\rm exp}_{G}$ is arc-connected and so it is contained in $G_a$. Let $x\in G$ such that $\alpha_G(x)\in G^{^{\wedge\wedge}}_{\rm lift}$ and a continuous homomorphism  $\widetilde{\alpha_G(x)}: G^\wedge \rightarrow \mathbb R$ such that $p\circ \widetilde{\alpha_G(x)}=\alpha_G(x)$.
Denote by $S$ the topological isomorphisms $S:\R\to \R^\wedge,\ s\to \chi_s:\chi_s(t)=e^{2\pi i s t}$.
Since $G$ is locally quasi--convex, the evaluation mapping $\alpha_G$ is  injective and open, so we can consider the homomorphism $f\in{\rm CHom}(G, \mathbb R)$ given by $f=\alpha_G^{-1}\circ (\widetilde{\alpha_G(x)})^\wedge\circ S$. Let us see that $f(1)=x$.
Observe first that $(\widetilde{\alpha_G(x)})^\wedge(S(1))=\alpha_G(x)$ [for $\psi\in G^\wedge$: $(\widetilde{\alpha_G(x)})^\wedge(S(1))(\psi)=(\chi_1\circ \widetilde{\alpha_G(x)})(\psi)=e^{2\pi i \widetilde{\alpha_G(x)}(\psi)}=(p\circ \widetilde{\alpha_G(x)})(\psi)=\alpha_G(x)(\psi)$]. Therefore, $f(1)=\alpha_G^{-1}\circ (\widetilde{\alpha_G(x)})^\wedge (S(1))=\alpha_G^{-1}(\alpha_G(x))=x$.
\end{proof}
The following Theorem, obtained from the previous propositions, shows in particular that in an arc--connected locally quasi--convex topological abelian group for which $\alpha_G$ is onto, every point lies in one one--parameter subgroup. The same was proved by Rickert in 1967 for compact arc--connected groups (see \cite{Ric}).
\begin{Theorem}\label{main}
Let $G$ be a Hausdorff locally quasi--convex topological abelian group for which $\alpha_G$ is onto. Then,  ${\rm im}\,{\rm exp}_{G}=G_a=\alpha_G^{-1}(G^{^{\wedge\wedge}}_{\rm lift})$
\end{Theorem}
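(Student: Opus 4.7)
My plan is to obtain the theorem as a short corollary of the two preceding propositions, by observing that together they close into a cyclic chain of inclusions. Proposition \ref{contained} gives $G_a \leq \alpha_G^{-1}(G^{\wedge\wedge}_{\rm lift})$ for any Hausdorff topological abelian group, with no hypothesis beyond Hausdorffness. Proposition \ref{alfasobre}, which uses both the local quasi-convexity and the surjectivity of $\alpha_G$, supplies the reverse direction in the strengthened form $\alpha_G^{-1}(G^{\wedge\wedge}_{\rm lift}) \leq {\rm im}\,{\rm exp}_G \leq G_a$. Chaining these yields
$$G_a \ \leq\ \alpha_G^{-1}(G^{\wedge\wedge}_{\rm lift}) \ \leq\ {\rm im}\,{\rm exp}_G \ \leq\ G_a,$$
so all four groups coincide, which is precisely the claimed equality ${\rm im}\,{\rm exp}_G = G_a = \alpha_G^{-1}(G^{\wedge\wedge}_{\rm lift})$.

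It is worth remarking that the two hypotheses on $G$ enter the argument in complementary places. Local quasi-convexity is exactly what is needed so that $\alpha_G$ is injective and open onto its image; this is what makes the composition $\alpha_G^{-1} \circ (\widetilde{\alpha_G(x)})^\wedge \circ S$ built in Proposition \ref{alfasobre} a well-defined continuous homomorphism from $\mathbb{R}$ into $G$, i.e. an actual one-parameter subgroup. The surjectivity of $\alpha_G$, in turn, is what lets the real-character lift constructed in Proposition \ref{contained} be transported back to an element of $G$: without it, an arc in $G$ could still produce a liftable element of $G^{\wedge\wedge}_{\rm lift}$ outside the image of $\alpha_G$, and the middle term in the chain above would fall out of reach of $\exp_G$.

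Because the substantive work has already been absorbed into Lemma \ref{arco}, Proposition \ref{continuous}, and Propositions \ref{contained} and \ref{alfasobre} (the homotopy-lifting argument for arcs, together with the explicit dualization $f = \alpha_G^{-1}\circ(\widetilde{\alpha_G(x)})^\wedge\circ S$), there is no genuine obstacle remaining at the level of the theorem itself. The only thing the proof needs to do is display the cyclic chain above and observe that it forces equality; the bookkeeping of which hypothesis is used where is already built into the propositions being combined.
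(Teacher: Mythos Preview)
Your proposal is correct and matches the paper's approach exactly: the paper does not even give a separate proof of Theorem~\ref{main}, stating only that it is ``obtained from the previous propositions,'' and your cyclic chain of inclusions from Propositions~\ref{contained} and~\ref{alfasobre} is precisely that deduction. The additional commentary on where each hypothesis enters is accurate and helpful, though not required for the argument.
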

Some classes of Hausdorff locally quasi--convex topological abelian group for which $\alpha_G$ is onto are the following:
reflexive groups, duals of pseudocompact groups, $P$-groups (see \cite{survey}), groups of continuous functions $C(X,\T)$ where $X$ is a completely regular $k$-space (see \cite[14.8]{Diss}) and the wide class of nuclear complete topological abelian groups \cite[21.5]{Diss}.

The class of nuclear groups was formally introduced by Banaszczyk in \cite{Ban}. It is   a class of topological groups   embracing nuclear spaces and locally compact abelian groups (as natural generalizations of finite-dimensional vector spaces). The definition  of nuclear groups
 is  very   technical, as could be expected from its virtue of  joining together objects of such different classes. A nice survey on nuclear groups is also provided by      L. Au$\ss$enhofer in \cite{Lydia2}.   The following are important facts concerning the class of nuclear groups:\\

 1. Nuclear groups are locally quasi-convex, $\cite[8.5]{Ban}$.

 2. Products, subgroups and quotients of nuclear groups are again nuclear, $\cite[7.5 ]{Ban}$.

3. Every locally compact abelian group is nuclear, $\cite[7.10]{Ban}$ .

4. Every closed subgroup in a nuclear topological group is dually closed $\cite[8.6 ]{Ban}$.

5. A nuclear locally convex space is a nuclear group, $\cite[7.4]{Ban}$. Furthermore, if a topological vector space $E$ is a nuclear group, then it is a locally convex nuclear space, $\cite[8.9]{Ban}$.

\begin{Proposition}\label{LieLydia}\cite[1.4]{LydiaLie}
Let $G$ be a Hausdorff locally quasi--convex topological abelian group for which $\alpha_G$ is continuous, then   the mapping $\Phi_0:{\cal L}(G)\to {\rm CHom}_{co}(G^\wedge,\mathbb{R^\wedge})$ given by $\varphi\mapsto \varphi^{\wedge}$: $\varphi^{\wedge}(\chi)=\chi\circ \varphi$ is an embedding.
\end{Proposition}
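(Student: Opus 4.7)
The plan is to verify that $\Phi_0$ is a well-defined group homomorphism that is injective, continuous, and open onto its image. The three hypotheses play distinct roles: additivity is formal, injectivity requires $\alpha_G$ injective (which follows from local quasi--convexity), continuity uses the assumption that $\alpha_G$ is continuous (equivalent to the statement that every compact subset of $G^\wedge$ is equicontinuous), and openness onto the image again uses local quasi--convexity through the identity $V = V^{\triangleright\triangleleft}$ for quasi--convex zero neighborhoods $V$.

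Well--definedness and additivity are routine: dualization sends continuous homomorphisms to continuous homomorphisms, and $\chi\circ(\varphi_1+\varphi_2) = \chi\circ\varphi_1 + \chi\circ\varphi_2$. For injectivity, if $\varphi^{\wedge}=0$ then $\chi(\varphi(t))=1$ for every $\chi\in G^\wedge$ and every $t\in\R$, so $\alpha_G(\varphi(t))=0$, and the injectivity of $\alpha_G$ (guaranteed by local quasi--convexity, as recalled in the preliminaries) forces $\varphi\equiv 0$.

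For continuity of $\Phi_0$, I would take a basic zero neighborhood in ${\rm CHom}_{co}(G^\wedge,\R^\wedge)$ of the form $\{f\colon f(K)(C)\subset U\}$, where $K\subset G^\wedge$ is compact, $C\subset\R$ is compact, and $U$ is a neighborhood of $1$ in $\T$. Continuity of $\alpha_G$ means that the preimage of $K^\triangleright$ is a zero neighborhood in $G$, which, after the usual shrinking argument, yields equicontinuity of $K$; hence there is a zero neighborhood $W\subset G$ with $\chi(W)\subset U$ for every $\chi\in K$. The basic zero neighborhood $\{\varphi\colon \varphi(C)\subset W\}$ of ${\cal L}(G)$ then maps inside, since $\chi(\varphi(t))\in\chi(W)\subset U$ for $\chi\in K$ and $t\in C$.

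For openness onto the image, I would start with a basic neighborhood $\{\varphi\colon \varphi(C)\subset V\}$ of $0$ in ${\cal L}(G)$, assuming (by local quasi--convexity) that $V$ is quasi--convex, so $V = V^{\triangleright\triangleleft}$. Since $V$ is a zero neighborhood, its polar $K := V^{\triangleright}$ is compact in $G^\wedge$, and the appropriate target neighborhood is $\{f\colon f(K)(C)\subset\T_+\}$. If $\varphi^{\wedge}$ lies there, then $\chi(\varphi(t))\in\T_+$ for every $\chi\in V^{\triangleright}$ and every $t\in C$, so $\varphi(t)\in V^{\triangleright\triangleleft}=V$, i.e., $\varphi$ is in the original neighborhood. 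The only really delicate step is the translation of ``$\alpha_G$ continuous'' into equicontinuity of compact subsets of $G^\wedge$; this is standard duality bookkeeping but it is the technical pivot of the whole argument and deserves the most care.
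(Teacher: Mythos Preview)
The paper does not prove this proposition; it is quoted verbatim from Au\ss enhofer \cite[1.4]{LydiaLie} and used as a black box (only the surjectivity of $\Phi_0$ in the reflexive case is argued, in Proposition~\ref{LieMJ}). So there is no in-paper proof to compare against.

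That said, your argument is correct and is essentially the standard one. A couple of remarks on presentation. First, your identification ``$\alpha_G$ continuous $\Leftrightarrow$ every compact $K\subset G^\wedge$ is equicontinuous'' is exactly right and is the crux of the continuity direction; you might make explicit that $\alpha_G^{-1}(K^{\triangleright})=K^{\triangleleft}$, so continuity of $\alpha_G$ gives that $K^{\triangleleft}$ is a zero neighborhood, whence $K\subset (K^{\triangleleft})^{\triangleright}$ is equicontinuous. Second, in the openness step the set $\{f:f(K)(C)\subset\T_+\}$ is not open (since $\T_+$ is closed), but it is a \emph{neighborhood} of $0$ because it contains the open set $\{f:f(K)(C)\subset{\rm int}\,\T_+\}$; this is harmless but worth saying once. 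With these two clarifications the proof is complete and self-contained, which is more than the paper itself offers.
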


\begin{Proposition}\label{LieMJ}
Let $G$ be a reflexive topological abelian group, then   the mapping $\Phi_0:{\cal L}(G)\to {\rm CHom}_{co}(G^\wedge,\mathbb{R^\wedge})$ given by $\varphi\mapsto \varphi^{\wedge}$ is a topological isomorphism.
\end{Proposition}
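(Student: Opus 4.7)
The plan is to leverage Proposition \ref{LieLydia} (LieLydia) to reduce the claim to surjectivity of $\Phi_0$, and then to construct an explicit inverse using the reflexivity of both $G$ and $\R$.

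First, observe that a reflexive group is in particular Hausdorff, locally quasi--convex, and has $\alpha_G$ continuous (indeed a topological isomorphism). Hence Proposition \ref{LieLydia} applies and tells us that $\Phi_0$ is already a topological embedding of ${\cal L}(G)={\rm CHom}(\R,G)$ into ${\rm CHom}_{co}(G^\wedge,\R^\wedge)$. Consequently, it is enough to prove that $\Phi_0$ is \emph{surjective}; once we have this, the ``open onto its image'' clause promotes the embedding to a topological isomorphism.

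To produce a preimage, take an arbitrary $\psi\in{\rm CHom}_{co}(G^\wedge,\R^\wedge)$ and use functoriality of duality to form $\psi^\wedge:\R^{\wedge\wedge}\to G^{\wedge\wedge}$, which is a continuous homomorphism. Since $\R$ is LCA, $\alpha_\R:\R\to\R^{\wedge\wedge}$ is a topological isomorphism, and since $G$ is reflexive the same is true of $\alpha_G$. I therefore set
$$\varphi:=\alpha_G^{-1}\circ \psi^\wedge\circ \alpha_\R:\R\longrightarrow G,$$
which is a well-defined element of ${\cal L}(G)$. The bulk of the remaining work is then a direct verification that $\Phi_0(\varphi)=\psi$: unwinding the definitions, for $\chi\in G^\wedge$ and $t\in\R$ one has $\psi^\wedge(\alpha_\R(t))(\chi)=\alpha_\R(t)(\psi(\chi))=\psi(\chi)(t)$, so that $\chi(\varphi(t))=\alpha_G(\varphi(t))(\chi)=\psi(\chi)(t)$, i.e.\ $\varphi^\wedge(\chi)=\psi(\chi)$ for every $\chi$.

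The main (rather mild) obstacle is keeping the identifications straight: one must check that $\psi^\wedge$ is continuous for the compact--open topologies (standard functoriality), and that the composition with $\alpha_G^{-1}$ lands in $G$ rather than some completion, which is where reflexivity of $G$ is crucial. No delicate estimates are needed beyond this bookkeeping, so I expect the argument to be short once the correct formula for the inverse is written down.
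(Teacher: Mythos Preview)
Your proposal is correct and follows essentially the same route as the paper: reduce to surjectivity via Proposition~\ref{LieLydia}, then for an arbitrary $\psi\in{\rm CHom}_{co}(G^\wedge,\R^\wedge)$ set $\varphi=\alpha_G^{-1}\circ\psi^\wedge\circ\alpha_\R$ and verify $\varphi^\wedge=\psi$ by evaluating at $\chi\in G^\wedge$ and $t\in\R$. The paper's argument is identical in substance, including the exact formula for the preimage and the pointwise check $\chi(\varphi(t))=\psi(\chi)(t)$.
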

\begin{proof}
Using Proposition \ref{LieLydia} we only need to check that $\Phi_0$ is onto:
Let $\psi: G^\wedge \to \mathbb{R^\wedge}$ be a continuous homomorphism and $\varphi=\alpha_G^{-1}\circ \psi^\wedge\circ \alpha_{\R}$. Let us see that $\varphi^{\wedge}=\psi$ or which is the same, that
for every $\chi\in G^\wedge$, $\varphi^{\wedge}(\chi)=\chi\circ \varphi=\chi\circ \alpha_G^{-1}\circ \psi^\wedge\circ \alpha_{\R}=\psi(\chi)$.

So, take $t\in \R$, if $x=(\alpha_G^{-1}\circ \psi^\wedge\circ \alpha_{\R})(t)=\alpha_G^{-1}( \psi)$, then $\alpha_G(x)=\alpha_{\mathbb R}(t)\circ \psi$

$\chi(x)=\alpha_G(x)(\chi)=(\alpha_{\mathbb R}(t)\circ \psi)(\chi)=\alpha_{\R}(t)(\psi(\chi))=\psi(\chi)(t)$,
Therefore $\varphi^{\wedge}(\chi)(t)=\psi(\chi)(t)$, for all $\chi \in G^\wedge$ and $t\in \R$, that is
$\varphi^{\wedge}=\psi$.
\end{proof}

\begin{Corollary}\label{LieR}
Let $G$ be a Hausdorff locally quasi--convex topological abelian group for which $\alpha_G$ is continuous, then ${\cal L}(G)$ is topologically isomorphic to a subgroup of ${\rm CHom}_{co}(G^\wedge,\mathbb{R})$. Moreover, if $G$ is
  reflexive  then    ${\cal L}(G) \cong {\rm CHom}_{co}(G^\wedge,\mathbb{R})$
\end{Corollary}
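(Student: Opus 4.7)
The plan is to reduce the statement to Propositions \ref{LieLydia} and \ref{LieMJ} by replacing the target group $\R^\wedge$ with $\R$ via the classical Pontryagin self-duality of $\R$, and then verifying that this replacement is topological for the compact--open topology.

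The key ingredient is the topological isomorphism $S:\R\to \R^\wedge$, $s\mapsto \chi_s$, $\chi_s(t)=e^{2\pi i s t}$, that has already been used in the proof of Proposition \ref{alfasobre}. I would then introduce the post-composition map
$$S_\ast:{\rm CHom}_{co}(G^\wedge,\R)\longrightarrow {\rm CHom}_{co}(G^\wedge,\R^\wedge),\qquad f\longmapsto S\circ f.$$
Because $S$ is a group homomorphism, $S_\ast$ is well defined and a group homomorphism; because $S$ is bijective, $S_\ast$ is bijective, with inverse $g\mapsto S^{-1}\circ g$.

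The next step is to check that $S_\ast$ is a homeomorphism for the compact--open topologies. This is where the only real verification lies, and it is routine: take a subbasic neighborhood of the compact--open topology, $W(K,V)=\{g: g(K)\subset V\}$ with $K\subset G^\wedge$ compact and $V\subset \R^\wedge$ open; then
$$S_\ast^{-1}(W(K,V))=\{f: S(f(K))\subset V\}=\{f: f(K)\subset S^{-1}(V)\}=W(K,S^{-1}(V)),$$
which is a subbasic neighborhood in ${\rm CHom}_{co}(G^\wedge,\R)$ since $S^{-1}$ is continuous. The symmetric argument using $S$ in place of $S^{-1}$ shows that $S_\ast$ itself is continuous, so $S_\ast$ is a topological isomorphism.

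Combining $S_\ast$ with Proposition \ref{LieLydia} gives, under the hypotheses of the first assertion, the composite embedding
$$\mathcal L(G)\xrightarrow{\Phi_0}{\rm CHom}_{co}(G^\wedge,\R^\wedge)\xrightarrow{S_\ast^{-1}}{\rm CHom}_{co}(G^\wedge,\R),$$
and combining it with Proposition \ref{LieMJ} upgrades this to a topological isomorphism when $G$ is reflexive. I do not foresee a real obstacle: everything hinges on the already-established Propositions \ref{LieLydia}, \ref{LieMJ} and on the self-duality of $\R$; the mildly technical point is the compact--open continuity of $S_\ast$, which is standard and handled by the subbasis computation above.
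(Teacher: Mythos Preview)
Your argument is correct and is precisely the approach the paper takes: the paper's proof consists of the single sentence ``Take into account that $S:\R\to\R^\wedge$ given by $S(t)(s)=e^{2\pi ist}$ is a topological isomorphism,'' and you have simply unpacked why post-composition with $S$ induces a topological isomorphism at the level of ${\rm CHom}_{co}$.
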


\begin{proof}
Take into account that $S: \R \to \R^\wedge$ given by $S(t)(s)=e^{2\pi ist}$ is a topological isomorphism.
\end{proof}

 We recall now that some  properties of the group preserved by ${\cal L}(G)$ are

 1. ${\cal L}(G)$ is Hausdorff, if the topological group $G$ is Hausdorff.
 %(see \cite{LydiaLie}).

 2. ${\cal L}(G)$ is complete, if the topological group $G$ is complete.
 %(see \cite{HM}).

 3. ${\cal L}(G)$ is a locally convex space,
  if the topological group $G$ is a Hausdorff locally quasi-convex group (see \cite[1.2]{LydiaLie}).

 4. ${\cal L}(G)$ is nuclear if the topological group $G$ is nuclear (see \cite[2.6]{LydiaLie}).

\begin{Proposition}
If $G$ is a locally quasi--convex metrizable and separable topological abelian group,  ${\rm CHom}_{co}(G^\wedge,\mathbb{R})$  is metrizable complete and separable. The Lie algebra ${\cal L}(G)$ is also metrizable and separable, and it is complete if the topological group $G$ is complete.
\end{Proposition}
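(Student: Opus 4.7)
The plan is to establish the three properties first for ${\rm CHom}_{co}(G^\wedge,\R)$ and then transfer them to ${\cal L}(G)$ via the embedding of Corollary \ref{LieR}. The crux is to show that, for $G$ metrizable separable and locally quasi-convex, the dual $G^\wedge$ is hemicompact with a fundamental sequence of compact metrizable subsets; everything else reduces to standard function-space arguments.

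First I would fix a countable neighborhood base $\{U_n\}_{n\in\N}$ of $0$ in $G$ and set $K_n=U_n^{\triangleright}$. Each $K_n$ is compact, and $G^\wedge=\bigcup_n K_n$, since every continuous character must be contained in some polar. The main technical obstacle is to show that every compact $C\subseteq G^\wedge$ is equicontinuous, and hence lies in some $K_n$; this is the Banach--Dieudonn\'e-type theorem for metrizable locally quasi-convex groups, which I would cite from \cite{Diss} rather than reprove. Metrizability of each $K_n$ then follows from separability of $G$: fixing a countable dense $D\subseteq G$, equicontinuity of $K_n$ means that the compact-open topology on $K_n$ coincides with pointwise convergence on $G$, which by density agrees with pointwise convergence on $D$, thereby embedding $K_n$ into the second-countable space $\T^D$.

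With these structural facts in hand the rest is routine. Hemicompactness of $G^\wedge$ combined with completeness of $\R$ makes $C_{co}(G^\wedge,\R)$ a complete metric space, with topology defined by the countable family of seminorms $f\mapsto\sup_{K_n}|f|$. Each $C(K_n,\R)$ with the sup norm is separable (Stone--Weierstrass on the compact metric space $K_n$), so the restriction map embeds $C_{co}(G^\wedge,\R)$ into the separable metric space $\prod_n C(K_n,\R)$; hence $C_{co}(G^\wedge,\R)$ is separable. The subset ${\rm CHom}_{co}(G^\wedge,\R)$ is closed there because pointwise limits of homomorphisms are homomorphisms, so it inherits metrizability, completeness, and separability.

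Finally I would invoke Corollary \ref{LieR}: ${\cal L}(G)$ is topologically isomorphic to a subgroup of ${\rm CHom}_{co}(G^\wedge,\R)$, hence sits inside a separable metric space and is itself metrizable and separable. Completeness of ${\cal L}(G)$ under the hypothesis that $G$ is complete was already recorded as property 2 of ${\cal L}(G)$ in the list preceding the proposition, so no further argument is needed there.
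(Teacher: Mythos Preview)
Your overall strategy coincides with the paper's: show $G^\wedge$ is hemicompact with metrizable compacta, deduce that ${\rm CHom}_{co}(G^\wedge,\R)$ is metrizable, complete and separable, and then pull metrizability and separability back to ${\cal L}(G)$ via Corollary~\ref{LieR}, while completeness of ${\cal L}(G)$ comes from the listed property~2. Your direct arguments for metrizability of the polars $K_n$ (embedding into $\T^D$) and for separability of $C_{co}(G^\wedge,\R)$ (embedding into $\prod_n C(K_n,\R)$) are more hands-on than the paper's, which simply cites \cite{ChMPT}, \cite{KLMPT} and \cite{Warner}, but they amount to the same thing.

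There is, however, one genuine gap. You assert that ``hemicompactness of $G^\wedge$ combined with completeness of $\R$ makes $C_{co}(G^\wedge,\R)$ a complete metric space.'' Hemicompactness gives metrizability, but not completeness: a Cauchy sequence $(f_n)$ converges uniformly on each $K_m$ to a function $f$ that is continuous on every compact subset, yet $f$ need not be globally continuous unless $G^\wedge$ is a $k$-space. The paper closes exactly this gap by invoking \cite{MJ}: for metrizable $G$ the dual $G^\wedge$ is not only hemicompact but also a $k$-space, and it is the $k$-space property that yields completeness of $C_{co}(G^\wedge,\R)$ (and hence of its closed subspace ${\rm CHom}_{co}(G^\wedge,\R)$). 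Once you add this one line, your argument is complete and matches the paper's.
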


\begin{proof}
Since $G$ is metrizable, $G^\wedge$ is hemicompact and a $k$-space \cite{MJ}. By the hemicompactness of $G^\wedge$, ${\rm CHom}_{co}(G^\wedge,\mathbb{R})$ is metrizable and because $G^\wedge$ is a $k$-space,  ${\rm CHom}_{co}(G^\wedge,\mathbb{R})$ is  complete.
On the other hand  since $G$ is metrizable and separable it holds (see \cite[1.7]{ChMPT}) that
   compact subsets of $G^\wedge$ are metrizable. But for a  hemicompact $k$-space $X$ whose compact subsets are metrizable, $C_{co}(X)$ is metrizable and separable (see \cite{KLMPT} and \cite{Warner}), hence  ${\rm CHom}_{co}(G^\wedge,\mathbb{R})$ is metrizable separable and complete. Observe now that ${\cal L}(G)$ is complete because $G$ is complete and it is metrizable and separable  because it is topologically isomorphic to a subgroup of  ${\rm CHom}_{co}(G^\wedge,\mathbb{R})$.
\end{proof}
{\bf Remark.}  For  torus groups
$\T^X$ were $X$ is an arbitrary set,we may identify the exponential function  with
the canonical quotient map $\R^X \to \T^X$, therefore, it is  open.  There are also compact
connected groups which are not torus groups but for which the exponential function
is open onto its image (see \cite{HMP}).  We next find other classes of abelian groups for which the
corestriction $\exp_{G}:{\cal L}(G)\longrightarrow G_a$ is open.

\begin{Theorem}\label{exp.open}
If $G$ is a  metrizable separable reflexive topological abelian group which has an arc--connected zero neighborhood, then the exponential mapping $\exp_{G}:{\cal L}(G)\longrightarrow G_a$ is a quotient map.
\end{Theorem}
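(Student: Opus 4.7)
The plan is to realize $\exp_{G}$ as a continuous surjective homomorphism between two Polish groups and then invoke the open mapping theorem for Polish groups (Banach--Pettis): every such homomorphism is automatically open, and an open continuous surjection is a quotient map. Surjectivity of $\exp_{G}$ onto $G_a$ is Theorem \ref{main}; continuity and the homomorphism property $\exp_{G}(\phi+\psi)=\phi(1)+\psi(1)$ are standard. So the remaining task is to put a Polish group structure on both source and target.

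For the target, I first argue that $G_a$ is an open subgroup of $G$. If $V$ is an arc--connected neighborhood of $0$, then $V\subseteq G_a$; since $G_a$ is a subgroup containing a neighborhood of $0$, it is open (and therefore clopen) in $G$. Hence $G_a$ inherits metrizability and separability from $G$. For completeness I use reflexivity: $G\cong G^{\wedge\wedge}={\rm CHom}_{co}(G^\wedge,\T)$, and since $G$ is metrizable the dual $G^\wedge$ is a hemicompact $k$--space, so $C_{co}(G^\wedge,\T)$ is complete. Thus $G$ is Polish, and its clopen subgroup $G_a$ is Polish.

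For the source, Corollary \ref{LieR} gives ${\cal L}(G)\cong {\rm CHom}_{co}(G^\wedge,\R)$, which by the preceding Proposition is metrizable and separable, and complete because $G$ is complete by the argument above. Hence ${\cal L}(G)$ is Polish. Applying Banach--Pettis to the continuous surjective homomorphism $\exp_{G}:{\cal L}(G)\to G_a$ yields that it is open, and therefore a quotient map. The delicate step I expect is deducing completeness of $G$ (and hence of both $G_a$ and ${\cal L}(G)$) purely from reflexivity together with metrizability; once the Polish structure is in place on both sides, the classical open mapping theorem closes the proof.
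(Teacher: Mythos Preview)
Your proposal is correct and follows essentially the same route as the paper: show $G_a$ is an open (hence closed) subgroup, use metrizability plus reflexivity to get completeness of $G$ and of ${\cal L}(G)$, conclude both are Polish, and apply the open mapping theorem to the continuous surjective homomorphism $\exp_G$. The only cosmetic difference is that you spell out the completeness argument via $G\cong G^{\wedge\wedge}={\rm CHom}_{co}(G^\wedge,\T)$ with $G^\wedge$ a hemicompact $k$--space, whereas the paper invokes the preceding Proposition and the fact (recorded in the subsequent Corollary) that metrizable reflexive groups are complete.
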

\begin{proof}
  By the above Proposition, $ {\cal L}(G)$ is metrizable complete and separable. On the other hand, since  $G$ is a locally quasi-convex  topological group for which $\alpha_G$ is onto ${\rm im}\,{\rm exp}_{G}=G_a$. By hypothesis,  $G_a$ is an open  subgroup of the group $G$, hence it is closed. Therefore $G_a$ is metrizable complete. Since the exponential mapping is a continuous and onto homomorphism, by the open mapping theorem, it is open.
\end{proof}
\begin{Corollary}
Let  $G$ be a  metrizable separable reflexive  topological group:

1.  If $G$ is locally arc-connected,  then the exponential map $\exp_{G}:{\cal L}(G)\longrightarrow G_a$ is a quotient map.

2. If $G$ is  arc-connected,  then the exponential map $\exp_{G}:{\cal L}(G)\longrightarrow G$ is a quotient map.

3. If  the exponential map $\exp_{G}:{\cal L}(G)\longrightarrow G_a$ is a quotient map and $G$ has an arc--connected zero neighborhood, then $G$ is locally arc--connected.
\end{Corollary}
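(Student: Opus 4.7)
Parts 1 and 2 will follow immediately from Theorem~\ref{exp.open}. In case 1, local arc-connectedness of $G$ supplies an arc-connected zero neighborhood on the nose, so the theorem applies. In case 2, arc-connectedness of $G$ means $G_a = G$ and $G$ itself is an arc-connected neighborhood of $0$, so the theorem delivers a quotient map $\exp_{G}: {\cal L}(G) \to G$. Both reductions are one-liners.

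For part 3 the content is twofold. First, because $G$ is reflexive and in particular locally quasi-convex, item~3 of the list of properties inherited by the Lie algebra (stated just before Theorem~\ref{exp.open}) gives that ${\cal L}(G)$ is a Hausdorff locally convex topological vector space. It therefore has a neighborhood basis of $0$ consisting of convex, hence arc-connected, sets, so ${\cal L}(G)$ is locally arc-connected. Second, I will invoke the standard topological fact that the image of a locally arc-connected space under a quotient map is locally arc-connected: given any open $V$ in the quotient and any $y \in V$, lifting arc-connected open neighborhoods at each preimage of points in the $V$-arc-component $C$ of $y$ exhibits $q^{-1}(C)$ as a union of open sets, hence $C$ is open. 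Applying this to the hypothesized quotient map $\exp_{G}: {\cal L}(G) \to G_a$, I conclude that $G_a$ is locally arc-connected.

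It remains to transfer local arc-connectedness from $G_a$ to the whole group $G$. The arc-connected zero neighborhood must lie inside $G_a$, so $G_a$ contains a neighborhood of $0$ and, being a subgroup, is therefore open in $G$; the other arc-components, being cosets of $G_a$, are also open. Given $g \in G$ and an open neighborhood $V$ of $g$, I translate to $0$, intersect $V-g$ with the open set $G_a$, use local arc-connectedness of $G_a$ to extract an arc-connected open neighborhood of $0$ inside $(V-g)\cap G_a$, and translate back to obtain an arc-connected open neighborhood of $g$ inside $V$. No step presents a real obstacle; the only genuinely non-formal inputs are the local convexity of ${\cal L}(G)$ and the elementary quotient-preservation lemma.
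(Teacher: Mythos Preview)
Your proof is correct and follows essentially the same approach as the paper. For parts 1 and 2 you apply Theorem~\ref{exp.open} directly, as does the paper; for part 3 the paper argues slightly more directly---since a quotient homomorphism is open, the small convex (hence arc-connected) zero neighborhoods in ${\cal L}(G)$ map onto open arc-connected zero neighborhoods in $G_a$---whereas you invoke the general lemma that quotient maps preserve local arc-connectedness, and you are more explicit than the paper about using the arc-connected zero neighborhood hypothesis to pass from $G_a$ to $G$.
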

\begin{proof}
For 1) and 2) it is enough to know that metrizable reflexive groups are complete, so all the hypothesis of the above Corollary are fulfilled.

3)
Since ${\cal L}(G)$ is a locally convex topological
vector space it has arbitrarily small arc--connected neighborhoods of
zero which are mapped onto open identity neighborhoods of $G_a$ by $\exp_{G}$.
\end{proof}

The following  Lemma shows that for a topological abelian group, to have enough real characters  and to have enough characters that can be lifted, are equivalent properties.

\begin{Lemma}\label{propliftingofchars}\cite[2.1]{ATChD}
Let $G$ be a  topological abelian group and $e_G\neq x\in
G$. The following assertions are equivalent:

1. There exists a real character $f$ such that $f(x)\neq 0$.

2. There exists a character
$\varphi\in  G^\wedge_{\rm lift}$ such that $\varphi(x)\neq 1$.

\end{Lemma}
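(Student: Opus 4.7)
The plan is to prove both directions directly, using the covering map $p: \R \to \T$, $p(s) = e^{2\pi i s}$, which underlies the very definition of $G^\wedge_{\rm lift}$. The argument amounts to unwinding that definition together with a one-variable rescaling.

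For the implication $(2)\Rightarrow(1)$: given $\varphi\in G^\wedge_{\rm lift}$ with $\varphi(x)\neq 1$, pick by definition a continuous real character $f:G\to\R$ with $p\circ f=\varphi$. Then $\varphi(x)=e^{2\pi i f(x)}\neq 1$ forces $f(x)\notin\Z$, so in particular $f(x)\neq 0$, and $f$ witnesses (1).

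For the implication $(1)\Rightarrow(2)$: given $f\in{\rm CHom}(G,\R)$ with $f(x)\neq 0$, rescale to obtain a real character whose value at $x$ is a non-integer. Concretely, set $\lambda:=1/(2f(x))$ and let $\varphi:=p\circ(\lambda f)$; then $\varphi$ is a continuous character of $G$, it lies in $G^\wedge_{\rm lift}$ because $\lambda f$ is a continuous real lift of it, and $\varphi(x)=e^{\pi i}=-1\neq 1$.

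The main ``obstacle'' is really only notational: one must remember that ``$\varphi$ can be lifted to a real character'' is exactly the statement that $\varphi$ factors through $p$. Given that, both implications are one-line arguments, and no hypothesis on $G$ beyond being a topological abelian group is needed.
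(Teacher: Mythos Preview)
Your proof is correct. Both implications are handled cleanly: for $(2)\Rightarrow(1)$ you simply take any real lift $f$ of $\varphi$ and observe $e^{2\pi i f(x)}\neq 1$ forces $f(x)\notin\Z$; for $(1)\Rightarrow(2)$ the rescaling $\lambda f$ with $\lambda=1/(2f(x))$ is a continuous homomorphism $G\to\R$ (scalar multiples of real characters are real characters), and $p\circ(\lambda f)$ is then a liftable character sending $x$ to $-1$.

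Note that the paper does not supply its own proof of this lemma: it is quoted from \cite[2.1]{ATChD} and stated without argument. Your write-up is exactly the natural direct verification one would expect, and nothing more than the definition of $G^\wedge_{\rm lift}$ together with the covering map $p$ is used, so there is no meaningful divergence in approach to discuss.
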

\begin{Corollary}
A Hausdorff topological abelian group $G$ has enough real characters if and only if $(G^{\wedge}_{\rm lift})^\bot=\{0\}$
\end{Corollary}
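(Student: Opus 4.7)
The plan is to read the corollary as nothing more than a quantified reformulation of the preceding Lemma~\ref{propliftingofchars}. The definition of the inverse annihilator says that $g \in (G^{\wedge}_{\rm lift})^\bot$ precisely when $\varphi(g) = 1$ for every $\varphi \in G^{\wedge}_{\rm lift}$. Negating, $g \notin (G^{\wedge}_{\rm lift})^\bot$ is equivalent to the existence of some $\varphi \in G^{\wedge}_{\rm lift}$ with $\varphi(g) \neq 1$. Hence $(G^{\wedge}_{\rm lift})^\bot = \{0\}$ amounts to saying: for every $x \neq 0$ in $G$ there is some lifted character $\varphi$ with $\varphi(x) \neq 1$.

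Next I would invoke Lemma~\ref{propliftingofchars}, which provides the missing equivalence pointwise: for each fixed $x \neq 0$, the existence of a real character $f$ with $f(x) \neq 0$ is equivalent to the existence of $\varphi \in G^{\wedge}_{\rm lift}$ with $\varphi(x) \neq 1$. Quantifying over all $x \in G \setminus \{0\}$ gives the claim: ${\rm CHom}(G,\R)$ separates points of $G$ if and only if $(G^{\wedge}_{\rm lift})^\bot = \{0\}$.

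There is no real obstacle here; the corollary is a direct consequence of the pointwise equivalence in the lemma combined with the definition of the inverse annihilator. The only thing to be mildly careful about is that the Hausdorff hypothesis is used implicitly only through the convention that ``separating the points'' is the correct notion (so that the trivial element to be separated is $0$ itself and the annihilator is taken inside $G$); no further argument is needed.
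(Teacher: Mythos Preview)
Your proposal is correct and is exactly the intended argument: the paper states this corollary without proof, as an immediate consequence of Lemma~\ref{propliftingofchars} together with the definition of the inverse annihilator, which is precisely what you spell out. Your side remark on the Hausdorff hypothesis is harmless and not needed for the argument.
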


%Explicar que el siguiente teorenma es un resultado importante del preprint don de se estudia en mas profundidad la EAP(equicontinuous arc property: every arc in the character group is equicontinuous).
\begin{Theorem} \label{AChD}\cite{AChD}
Let $(G,\tau)$ be an abelian Hausdorff satisfying EAP.
 Then $$G^{\wedge}_{\rm lift} = {\rm im}\,{\rm exp}_{G^{\wedge}}= (G^{\wedge})_a$$
\end{Theorem}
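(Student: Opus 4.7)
I would prove the theorem through the circle of inclusions
\[
{\rm im}\,\exp_{G^\wedge}\ \subseteq\ (G^\wedge)_a\ \subseteq\ G^\wedge_{\rm lift}\ \subseteq\ {\rm im}\,\exp_{G^\wedge}.
\]
The first inclusion is automatic, since for any $\varphi\in{\rm CHom}(\R,G^\wedge)$ the continuous curve $t\mapsto\varphi(t)$ is an arc from $1_{G^\wedge}$ to $\varphi(1)$, placing the latter in $(G^\wedge)_a$. The third inclusion is a short construction: given $\chi=p\circ f$ with $f\in{\rm CHom}(G,\R)$, set $\varphi(s)(g):=e^{2\pi i s f(g)}$. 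Continuity of $\varphi:\R\to G^\wedge$ into the compact--open topology reduces to the boundedness of $f$ on each compact subset of $G$, and clearly $\varphi(1)=\chi$, so $\chi\in{\rm im}\,\exp_{G^\wedge}$.

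The heart of the matter is the middle inclusion, and this is where the EAP hypothesis is essential. Given $\chi\in(G^\wedge)_a$, pick an arc $\gamma:\mathbb I\to G^\wedge$ with $\gamma(0)=1_{G^\wedge}$ and $\gamma(1)=\chi$. EAP says $\gamma(\mathbb I)$ is equicontinuous, so there is some zero neighborhood $U\subset G$ on which all the characters $\gamma(t)$ take values in any prescribed neighborhood of $1\in\T$. Exactly as in Proposition \ref{continuous}, this upgrades the evaluation
\[
\Psi:G\times\mathbb I\longrightarrow\T,\qquad \Psi(g,t):=\gamma(t)(g),
\]
to a jointly continuous map: control in the $G$--direction at $g_0$ is uniform in $t$ by equicontinuity (writing $\gamma(t)(g)\overline{\gamma(t)(g_0)}=\gamma(t)(g-g_0)$), and control in the $t$--direction for fixed $g_0$ is immediate from the continuity of the arc $\gamma$ composed with evaluation at $g_0$. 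The map $\Psi(\cdot,0)\equiv 1$ admits the trivial lift $0:G\to\R$, and the homotopy lifting property of the covering $p:\R\to\T$ then produces a continuous $F:G\times\mathbb I\to\R$ with $F(\cdot,0)\equiv 0$ and $p\circ F=\Psi$. For any fixed $t$, both $F(g_1+g_2,\cdot)$ and $F(g_1,\cdot)+F(g_2,\cdot)$ are lifts of the path $\gamma(\cdot)(g_1+g_2)=\gamma(\cdot)(g_1)\gamma(\cdot)(g_2)$ starting at $0$, so unique path lifting forces them to coincide; hence $F(\cdot,t)$ is a homomorphism for every $t$. In particular $f:=F(\cdot,1)$ is a continuous real character with $p\circ f=\chi$, witnessing $\chi\in G^\wedge_{\rm lift}$.

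I expect the delicate point to be the joint continuity of $\Psi$: the group $G$ is only Hausdorff abelian, so without some equicontinuity information there is no handle on how $\gamma(t)(g)$ depends on $g$ uniformly in $t$, and EAP is precisely what is needed to reduce this to a neighborhood estimate at $0$. Once $\Psi$ is continuous, the covering--space machinery imposes no further conditions on $G\times\mathbb I$, and the fibrewise homomorphism property is essentially free because the two candidate lifts agree at $t=0$. The small but important bookkeeping step is to verify that the lift at $t=0$ is genuinely the zero homomorphism (not merely a constant), so that unique path lifting can be applied to each pair of candidate lifts fibre by fibre.
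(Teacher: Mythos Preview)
The paper does not actually prove this theorem; it is quoted verbatim from \cite{AChD} and used as a black box in the proof of Theorem~\ref{section:maintheorem}. So there is no in--paper proof to compare against. That said, your argument is correct and is exactly the natural one: it mirrors, with the roles of $G$ and $G^\wedge$ interchanged, the paper's own Propositions~\ref{continuous}--\ref{alfasobre}. In Proposition~\ref{contained} the joint continuity of $\Phi_\gamma:G^\wedge\times\mathbb I\to\T$ is obtained from the fact that $\alpha_G(\gamma(\mathbb I))$ is automatically equicontinuous (Lemma~\ref{arco}); in your dual situation the arc lives in $G^\wedge$ and equicontinuity of $\gamma(\mathbb I)\subset G^\wedge$ is precisely the content of the EAP hypothesis, which is why it is needed. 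The homotopy--lifting step and the unique--path--lifting argument for additivity are identical to those in Proposition~\ref{contained}, and your inclusion $G^\wedge_{\rm lift}\subseteq{\rm im}\,\exp_{G^\wedge}$ is the dual analogue of Proposition~\ref{alfasobre} (and in fact simpler, since no $\alpha_G^{-1}$ is required). In short: your proof is sound, and it is essentially the same strategy the paper deploys for the companion result Theorem~\ref{main}; presumably \cite{AChD} proceeds the same way.
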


\begin{Theorem}\label{section:maintheorem}
Let $(G,\tau)$ be an abelian Hausdorff group such that every arc in the character group is equicontinuous. Consider the
following conditions:
\begin{itemize}
\item[a)] $G$ has enough real characters.
\item[b)] $(G^{\wedge})_a$ is dense in $G^\wedge$, with the pointwise convergence topology.
\end{itemize}
Then a)$\Rightarrow$ b). If $G$ is a MAP group, b)$\Rightarrow$ a).
\end{Theorem}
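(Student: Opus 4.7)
The plan is to derive both implications by pivoting through the key identification supplied by Theorem~\ref{AChD}, which, under the EAP hypothesis, gives $(G^\wedge)_a = G^\wedge_{\rm lift}$. Combined with Lemma~\ref{dense} and the Corollary that characterises ``enough real characters'' by $(G^\wedge_{\rm lift})^\bot = \{0\}$, both directions reduce to essentially one line of bookkeeping.

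For a) $\Rightarrow$ b), I would start with the assumption that $G$ has enough real characters and apply the Corollary after Lemma~\ref{propliftingofchars} to obtain $(G^\wedge_{\rm lift})^\bot = \{0\}$. The EAP hypothesis plus Theorem~\ref{AChD} rewrites this as $((G^\wedge)_a)^\bot = \{0\}$. Then the forward half of Lemma~\ref{dense}, applied to the subgroup $L = (G^\wedge)_a$ of $G^\wedge$, directly yields that $(G^\wedge)_a$ is dense in $\omega(G^\wedge, G)$.

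For b) $\Rightarrow$ a) with $G$ MAP, the argument runs in reverse. Assume $(G^\wedge)_a$ is dense in $\omega(G^\wedge, G)$. The MAP hypothesis on $G$ is precisely what unlocks the reverse implication of Lemma~\ref{dense}: every element of $G$ defines, via $\alpha_G$, a character of $G^\wedge$ that is continuous for $\omega(G^\wedge, G)$, and these characters separate the points of $G^\wedge$ exactly when $G$ is MAP. Consequently $((G^\wedge)_a)^\bot = \{0\}$, hence $(G^\wedge_{\rm lift})^\bot = \{0\}$ again by Theorem~\ref{AChD}, and the Corollary then returns the conclusion that $G$ has enough real characters.

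The only genuinely delicate point is the matching between the MAP assumption in the statement (which is placed on $G$) and the hypothesis under which the reverse direction of Lemma~\ref{dense} applies; this is handled by the observation in the preceding paragraph that the $\omega(G^\wedge,G)$-continuous characters of $G^\wedge$ are exactly the evaluations $\alpha_G(g)$, so ``$G$ MAP'' is indeed the correct assumption in this setting. Once that identification is made, no further analytic work is required: everything else is a direct concatenation of Theorem~\ref{AChD}, Lemma~\ref{dense}, and the Corollary.
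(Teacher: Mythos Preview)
Your proposal is correct and follows essentially the same route as the paper: both arguments pivot on Theorem~\ref{AChD} to identify $(G^\wedge)_a$ with $G^\wedge_{\rm lift}$, invoke the Corollary after Lemma~\ref{propliftingofchars} to translate ``enough real characters'' into $(G^\wedge_{\rm lift})^\bot=\{0\}$, and then apply Lemma~\ref{dense} in each direction. Your explicit discussion of why ``$G$ MAP'' (rather than ``$G^\wedge$ MAP'') is the operative hypothesis for the reverse direction of Lemma~\ref{dense} is in fact a clarification that the paper's proof leaves implicit.
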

\begin{proof}
By Lemma \ref{dense} and Theorem \ref{AChD}, $((G^{\wedge})_a)^\bot=(G^{\wedge}_{\rm lift})^\bot$. If the group $G$ has enough real characters $(G^{\wedge}_{\rm lift})^\bot$ is trivial and then $(G^{\wedge})_a$ is dense in $G^\wedge$, with the pointwise convergence topology. For the reverse implication take into account that for a MAP group $G$, $(G^{\wedge})_a$  dense in $G^\wedge$ with the pointwise convergence topology, implies $((G^{\wedge})_a)^\bot$ is trivial.
\end{proof}

\begin{Corollary}\label{LieMJ}
Let $G$ be a locally quasi-convex   topological abelian group such that $\alpha_G$ is onto then,
\begin{enumerate}
\item
$G_a$ is $\omega(G,G^\wedge)$--dense iff $G^\wedge$ has enough real characters.
\item
$G$ is arc--connected iff every character of $G^{\wedge}$ can be lifted.
\end{enumerate}
\end{Corollary}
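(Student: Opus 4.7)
The plan is to deduce both statements from Theorem \ref{main}, which under the present hypotheses gives $G_a = \alpha_G^{-1}(G^{\wedge\wedge}_{\rm lift})$, together with the fact that $\alpha_G$ is a bijection (injectivity comes from local quasi-convexity, surjectivity by hypothesis). Thus $\alpha_G$ restricts to a bijection $G_a \to G^{\wedge\wedge}_{\rm lift}$, and each of the two claims reduces to re-expressing a condition on $G$ as a condition on $G^\wedge$ via this identification.

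For (1), I would translate $\omega(G,G^\wedge)$-density through annihilators. Since $G_a$ is a subgroup, Lemma \ref{dense} gives that $G_a$ is $\omega(G,G^\wedge)$-dense iff $G_a^\bot = \{e\}$. Now for $\chi \in G^\wedge$, the condition $\chi \in G_a^\bot$ reads $\chi(x)=1$ for every $x \in G_a$, i.e.\ $\alpha_G(x)(\chi)=1$ for every $x \in G_a$; using $\alpha_G(G_a) = G^{\wedge\wedge}_{\rm lift}$ this is exactly the condition that $\chi$ annihilates $G^{\wedge\wedge}_{\rm lift}$ in the pairing of $G^\wedge$ with $G^{\wedge\wedge}$. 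Hence $G_a^\bot = (G^{\wedge\wedge}_{\rm lift})^\bot$. Applying the Corollary that follows Lemma \ref{propliftingofchars} to the group $G^\wedge$ in place of $G$, the right-hand side is trivial iff $G^\wedge$ has enough real characters, which proves (1).

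For (2), the translation is immediate: $G$ is arc-connected iff $G = G_a$. Applying the bijection $\alpha_G$, this becomes $G^{\wedge\wedge} = \alpha_G(G) = \alpha_G(G_a) = G^{\wedge\wedge}_{\rm lift}$, which is exactly the statement that every character of $G^\wedge$ lifts to a continuous real character.

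The only real (and mild) obstacle is bookkeeping the layered duality: one must keep straight in which group each annihilator is taken, and check that the Corollary after Lemma \ref{propliftingofchars}, originally stated for $G$, is legitimately applied to $G^\wedge$. Since that Corollary is a purely formal consequence of Lemma \ref{propliftingofchars} (which is valid for any topological abelian group), this causes no difficulty, and no additional hypothesis on $G^\wedge$ is needed beyond what is already in force.
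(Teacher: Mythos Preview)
Your argument is correct and follows essentially the same route as the paper: both proofs invoke Theorem~\ref{main} to identify $G_a$ with $\alpha_G^{-1}(G^{\wedge\wedge}_{\rm lift})$, use Lemma~\ref{dense} to reduce $\omega(G,G^\wedge)$-density of $G_a$ to $(G^{\wedge\wedge}_{\rm lift})^\bot=\{0\}$, and then apply the Corollary after Lemma~\ref{propliftingofchars} to $G^\wedge$. Your treatment is slightly more explicit about the duality bookkeeping, but the underlying argument is identical.
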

\begin{proof}
(1) By Theorem \ref{main} and Lemma \ref{dense}, $G_a$ is $\omega(G,G^\wedge)$--dense iff $(G_a)^\bot=(\alpha_G^{-1}(G^{^{\wedge\wedge}}_{\rm lift}))^\bot=(G^{^{\wedge\wedge}}_{\rm lift})^\bot=\{0\}$, iff $G^\wedge$ has enough real characters.

(2) Again by Theorem \ref{main}, $G_a=G$ iff $G^{^{\wedge\wedge}}_{\rm lift}
=G^{^{\wedge\wedge}}$.
\end{proof}

%{\bf Remark}. Observe that Proposition \ref{contained} implies that for  every arc--connected group $\alpha_G(G)\subset G^{^{\wedge\wedge}}_{\rm lift}$ .

\begin{Corollary}
Let $G$ be a locally quasi--convex topological abelian group with $\alpha_G$ onto and such that  closed subgroups are dually closed.
If $G^\wedge$ has enough real characters, then $G$ is connected.
\end{Corollary}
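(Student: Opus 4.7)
The plan is to combine the preceding Corollary (the characterization of Bohr-density of $G_a$) with the hypothesis that closed subgroups are dually closed, applied to the connected component $G_0$.

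First, I would invoke part (1) of the previous Corollary: since $G$ is locally quasi-convex with $\alpha_G$ onto and $G^\wedge$ has enough real characters, the arc component $G_a$ is $\omega(G,G^\wedge)$-dense in $G$. This gives the density piece.

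Next I would show that the connected component $G_0$ is closed in the Bohr topology $\omega(G,G^\wedge)$. Since $G_0$ is a closed subgroup of $G$, the hypothesis on the group guarantees that $G_0$ is dually closed. By the definition of dually closed, for every $x\in G\setminus G_0$ there exists $\varphi\in G^\wedge$ with $\varphi(G_0)=\{1\}$ and $\varphi(x)\neq 1$; equivalently, $G_0=\bigcap_{\varphi\in G_0^\bot}\ker\varphi$, which is an intersection of $\omega(G,G^\wedge)$-closed sets and therefore $\omega(G,G^\wedge)$-closed.

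Finally I would conclude: since $G_a\subseteq G_0$ and $G_0$ is $\omega(G,G^\wedge)$-closed, the Bohr closure of $G_a$ is contained in $G_0$; but by the first step this Bohr closure is all of $G$, hence $G=G_0$, i.e., $G$ is connected. No step looks like a serious obstacle here — the whole argument is essentially a transparent combination of the preceding Corollary with the standard fact that dually closed subgroups are precisely the Bohr-closed subgroups; the only thing to be careful about is making sure one applies the right formulation of dually closed to identify it with Bohr closedness rather than closedness in the original topology.
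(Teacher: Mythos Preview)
Your proposal is correct and follows essentially the same route as the paper: use the preceding corollary to get $\omega(G,G^\wedge)$-density of $G_a$, note $G_a\subseteq G_0$, observe that $G_0$ is closed and hence dually closed (i.e.\ Bohr-closed) by hypothesis, and conclude $G_0=G$. Your explicit justification that dually closed means $G_0=\bigcap_{\varphi\in G_0^\bot}\ker\varphi$ is Bohr-closed is a welcome addition, but otherwise the argument is identical to the paper's.
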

\begin{proof}
Having  $G^\wedge$
 enough real characters  the arc--component
$G_a$ of $G$ is  $\omega(G,G^\wedge)$--dense. Let $G_0$ be the
connected  component of $G$. It is clear that $G_a\subset G_0$
therefore $G_0$ is $\omega(G,G^\wedge)$--dense. Since $G_0$ is a closed
subgroup of the  group $G$,  $G_0$ is dually closed
and therefore it is $\omega(G,G^\wedge)$--closed. Then $G_0=G$, that
is: $G$ is connected.
\end{proof}
{\bf Examples}: Nuclear complete topological abelian groups are locally quasi--convex topological abelian groups with $\alpha_G$ onto and such that  closed subgroups are dually closed. Therefore every nuclear complete topological  group, such that its dual group, $G^\wedge$ has enough real characters, is connected.

The previous results allow us to give an alternative proof  for the following  well known fact.
\begin{Corollary}
Let $G$ be an LCA group then $G$ is connected iff $G^\wedge$ has enough real characters.
\end{Corollary}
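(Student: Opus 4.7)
The plan is to derive both directions directly from the two preceding corollaries, once we verify that an LCA group $G$ satisfies all the hypotheses required there. Recall that by Pontryagin--van Kampen duality $\alpha_G$ is a topological isomorphism, in particular onto. Moreover, LCA groups are nuclear (item 3 of the list of properties of nuclear groups), so by item 1 they are locally quasi--convex, and by item 4 every closed subgroup is dually closed. Hence the structural assumptions of both preceding corollaries are automatically satisfied for an LCA group.

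For the implication ``$G^\wedge$ has enough real characters $\Rightarrow$ $G$ connected'': this is a direct application of the immediately preceding corollary, because $G$ is locally quasi--convex, $\alpha_G$ is onto, and closed subgroups of $G$ are dually closed.

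For the implication ``$G$ connected $\Rightarrow$ $G^\wedge$ has enough real characters'': I would invoke the classical LCA fact (property 2 at the start of the discussion of arc components, cited from \cite[7.71]{HM}) that $G_a$ is dense in the connected component $G_0$. Since $G$ is connected, $G_0=G$, so $G_a$ is dense in $G$ in the original topology, and hence also in the coarser topology $\omega(G,G^\wedge)$. Part (1) of Corollary \ref{LieMJ} then gives that $G^\wedge$ has enough real characters.

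There is no real obstacle here: the work has been done upstream, and the only task is to check that the nuclear/reflexivity/dual-closedness hypotheses of the two preceding corollaries hold for LCA groups (all of which are standard consequences of Pontryagin--van Kampen duality and Banaszczyk's results on nuclear groups) and to combine them with the classical fact $\overline{G_a}\supseteq G_0$ in the LCA setting.
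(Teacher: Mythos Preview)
Your proof is correct and follows essentially the same route as the paper: the ``if'' direction is reduced to the preceding corollary by checking that LCA groups satisfy its hypotheses (the paper does this via ``nuclear complete'', you via nuclearity plus Pontryagin--van Kampen, which amounts to the same thing), and the ``only if'' direction uses the classical density of $G_a$ in $G_0$ together with Corollary~\ref{LieMJ}(1), exactly as the paper does.
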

\begin{proof}
Observe that LCA groups are nuclear complete, then the if part is true. Let us prove the reverse implication. Since $G$ is an LCA group, $G_a$ is dense in $G_0$. But $G_0=G$ because $G$ is connected, so $G_a$ is dense and therefore $\omega(G,G^\wedge)$ dense in $G$. Therefore, by Corollary \ref{LieMJ}, $G^\wedge$ has enough real characters.
\end{proof}

Acknowledgement. The author is indebted to Professors E. Martin Peinador and
X. Dominguez and S. Ardanza-Trevijano for very helpful suggestions.

\end{document}